
\documentclass[12pt,reqno]{article}
\usepackage{hyperref}
\usepackage{amsmath, amsthm, graphicx,amsfonts, amssymb,color}
\usepackage{bm}

\setlength{\topmargin}{-2cm} \setlength{\oddsidemargin}{0cm} \setlength{\evensidemargin}{0cm}
\setlength{\textwidth}{16truecm} \setlength{\textheight}{24truecm}
\vfuzz2pt 
\hfuzz2pt 
\newtheorem{thm}{Theorem}[section]
\newtheorem{cor}[thm]{Corollary}
\newtheorem{lem}[thm]{Lemma}
\newtheorem{prop}[thm]{Proposition}
\theoremstyle{definition}

\theoremstyle{remark}
\newtheorem{rem}[thm]{Remark}
\theoremstyle{example}

\numberwithin{equation}{section}
\usepackage{mathrsfs}

\newcommand{\R}{\mathbb R}

\newcommand{\E}{\mathbb{E}}

\newcommand{\nn}{\nonumber}

\def\e{\mathcal E}

\def\N{\mathbb N}
\def\cL{\mathcal L}

\def\cN{\mathcal N}

\def\P{\mathbb P}
\def\R{\mathbb{R}}

\def\bg{\begin}
\def\be{\bg{equation}}
\def\de{\end{equation}}

\def\hat{\widehat}
\def\bar{\overline}



\title{\bf {Variational principles for the exit time of non-symmetric diffusions}}

\author{
{\bf Lu-Jing Huang}\\
{\small School of Mathematics and Statistics \& Key Laboratory of Analytical Mathematics and Applications} \\
{\small (Ministry of Education) \& Fujian Provincial Key Laboratory of Statistics and Artificial Intelligence}\\
{\small \& Fujian Key Laboratory of Analytical Mathematics and Applications (FJKLAMA) }\\
{\small \& Center for Applied Mathematics of Fujian Province (FJNU), Fujian Normal University}\\
{\bf Kyung-Youn Kim}\\
{\small  Department of Applied Mathematics, National Chung Hsing University }\\
{\bf Yong-Hua Mao}\\
{\small  Laboratory of Mathematics and Complex Systems (Ministry of Education),}\\ {\small School of Mathematical Sciences, Beijing Normal University}
}

\date{}

\begin{document}

 \maketitle


\begin{abstract}

In this paper we develop some new variational principles for the exit time of non-symmetric diffusions from a domain. As applications, we give some comparison theorems and monotonicity law between different diffusions.

\end{abstract}

{\bf Keywords and phrases:} Variational principle, non-symmetric diffusion, exit time, comparison theorem, monotonicity law.

{\bf Mathematics Subject classification(2020):} 60J60, 60G40

{\bf Running head} Variational principles for the exit time

\section{Introduction}\label{intro}

The exit time of a diffusion from a domain has been widely used in applied mathematics
when certain problems are considered as underlying stochastic processes that eventually will reach (escape) a specific level (see e.g. \cite{Ca17,RMO14}).
It also plays an important role in probability theory.
{Indeed,} the exit time is the starting point for the study of the ergodicity (see e.g. \cite{Ch05,IW89,Wa05}), and it provides some good estimates of the spectral gap of diffusions (see e.g. \cite{DLM17,LLS11,Mc13,VFNT16}).
There are many studies {that} focus on the exit time, such as the distribution of the exit time of Ornstein-Uhlenbeck processes (\cite{APP05,GY03,Li04,RS88}), the Laplace transform and exponential form of the exit time (\cite{BS02,Di07}),
and the mean exit time (\cite{AF02,GJ08,ISR97}). In \cite{KM99}, the authors got some variational principles for the mean exit time moments of symmetric diffusions.

However, due to the lack of tools to deal with non-self adjoint operators, most prior results and applications of the exit time
 cited above, are restricted to symmetric diffusions.
Recently, Huang and Mao \cite{HM17,HM18} gave the variational principles for the hitting times of non-symmetric Markov chains  and from it some estimates and comparison theorems of the hitting times are obtained.
Motivated by the previous works, the goal of this paper is establishing some new kinds of variational principles for the exit time of non-symmetric diffusions and some comparison results.

In this paper, we consider the differential operator in divergence form:
\be\label{L}
L=\nabla \cdot a\nabla+b\cdot\nabla,
\de
where $a=(a_{ij})_{1\leq i,j\leq d}$ is a strictly positive definite matrix and $b=(b_1,\cdots,b_d)$ is a vector with $a_{ij},b_i\in C^1(\R^d)$.
Corresponding to the operator $L$,  there exists a unique solution $\P_x,x\in\overline\R^d$  satisfying the strong Markov property and the Feller property (see e.g. \cite[Theorem 1.11.1]{Pi95}), and we denote $X=(X_t)_{t\geq0}$ for the  diffusion {associated to} $L$.  {When $b\neq 0$,  $L$ is a non-self adjoint operator with respect to the Lebesgue measure and $X$ is a non-symmetric diffusion with respect to the Lebesgue measure. However, it is important to note that $L$ with $b\neq 0$ may be symmetric with respect to another measure. For instance, if $b$ is in the form of $b=a\nabla Q$ for some smooth function $Q$, then $L$ is self-adjoint with respect to the measure $\text{e}^{-Q(x)}{\rm d} x$.}

{For any open set $D\subset \R^d$, let $C^2_0(\bar{D})$ denotes the space of functions on $\bar{D}$ with compact support, and all of whose partial derivatives up to order 2 are continuous on $D$. For fixed} $\beta\geq0$, define a bilinear form  $\e_\beta^D$ associated with $L$ on ${C_0^2(\bar{D})}$ as
\be\label{df_beta}
\e_\beta(f,g):=	\e_\beta^{D}(f,g)=\int_D f(x)\big((\beta-L) g\big)(x){\rm d} x,\quad \text{for }f,g\in {C_0^2(\bar{D})}.
\de
{For simplicity, we will write $\e_0$ as $\e$.}
Denote by
$$\tau_D:=\inf\{t\ge 0:X_t\notin D\}$$
the first exit time from $D$ for $X$, and {let $\lambda_0(D)$ be the generalized principal eigenvalue} of $L$ on $D$. Then it is well known that
$$
\lambda_0(D)=\lim_{t\rightarrow\infty}\frac{1}{t}\log\sup_{x\in D}\mathbb{P}_x(\tau_D>t),
$$
see Lemma \ref{spec} below for more details. We use the notation $D'\Subset D$ if $D'$ is bounded and  $\bar{D'}\subset D$.
In the following, assume that $L$ satisfies the following {\bf Assumption A} on $D$:
\begin{itemize}
	\item[(A1)] $a_{ij},\ b_j\in C^{1,\alpha}(\bar{D})$;

	\item[(A2)] $v\cdot a(x)v>0$, for all $x\in\bar{D}$ and $v\in\R^d\backslash\{0\}$;

\item[(A3)]{ $\text{div}(b)=0$. Here $\text{div}(b):=\sum_{i=1}^d \partial b_i/\partial x_i$ is the divergence of $b$.}

\end{itemize}
Under the {\bf Assumption A},
we present some variational formulas of the exit time from domains which is our first main result.
{To do this,} let
$$
\cN_{\delta}:=\left\{h\in C^2_0(\bar{D}): {h|_{\partial D}=0\ \text{and }}\int_D h {\rm d} x=\delta\right\}\qquad \text{for }\delta=0, 1.
$$
\begin{thm}\label{vf_ht_unb}
Let $D\subset \R^d$ be a domain.
Assume that $L$ is the operator defined in \eqref{L} satisfying {\bf Assumption A} on any subdomain $D'\Subset D$ and {$\lambda_0( D)<0$}.
 Then for any $\beta>0$,
 \be\label{vf_htunb}
		\frac{\beta}{\int_D \big(1-\E_x[\exp(-\beta\tau_D)]\big){\rm d} x}=
		\inf_{f\in\cN_{1}}
		\sup_{g\in \cN_{0}}
		\e_\beta(f-g,f+g),
		\de
and
\begin{equation}\label{vf-meanunb}
		\frac{1}{\int_D\E_x[\tau_D] {\rm d} x}=
				\inf_{f\in\cN_{1}}
		\sup_{g\in \cN_{0}}
		\e(f-g,f+g).
\end{equation}
In particular, if $L$ is self-adjoint with respect to the Lebesgue measure, i.e. $b=0$, then \eqref{vf_htunb} and \eqref{vf-meanunb} {can be} reduced to
$$
         \frac{\beta}{\int_D \big(1-\E_x[\exp(-\beta\tau_D)]\big){\rm d} x}=\inf_{f\in\cN_{1}}\e_\beta(f,f)\quad \text{and}\quad
         \frac{1}{\int_D\E_x[\tau_D] {\rm d} x}=\inf_{f\in\cN_{1}}\e(f,f).
         $$
\end{thm}

In particular, we give the variational formula of the exponential moments of the exit time for symmetric diffusion $X$ corresponding to $L=\nabla \cdot a\nabla$ as follows:

\begin{thm}\label{expm1}
Let $D\subset \R^d$ be a domain.
Assume that $L$ is the operator defined in \eqref{L} with $b=0$ satisfying {\bf Assumption A} on any subdomain $D'\Subset D$ and $\lambda_0( D)<0$.
Then for any $\beta>0$ we have
\be
\frac{\beta}{\int_D\big(\E_x[\exp(\beta\tau_D)]-1\big) {\rm d} x}=\left(\inf_{f\in{\cN_1}}\e_{-\beta}(f,f)\right)\vee 0,\nn
\de
{where we use a notation $a\vee b:=\max\{a, b\}$.}
\end{thm}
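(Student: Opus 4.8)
The plan is to mimic the approximation argument already used in the proof of Theorem \ref{vf_ht_unb}, but now with the exponential-moment functional and the twist coming from the $\vee\,0$. Fix an exhaustion $\{D_n\}_{n=1}^\infty$ of $D$ by bounded domains with $C^{2,\alpha}$-boundaries, $D_n\subset D_{n+1}$, $\cup_n D_n=D$, and set $\tau_n=\inf\{t\ge0:X_t\notin D_n\}$. Since $\lambda_0(D)<0$ and $\lambda_0(\cdot)$ is monotone in the domain, $\lambda_0(D_n)<0$ for all $n$, so Theorem \ref{expm} applies on each $D_n$, giving
\be\nn
\frac{\beta}{\int_{D_n}\E_x\exp(\beta\tau_n)\,dx-\text{vol}(D_n)}=\left(\inf_{f\in\cN_1^{(n)}}\e_{-\beta}(f,f)\right)\vee 0,
\de
where $\cN_\delta^{(n)}=\{h\in C^2(\bar D_n):\int_{D_n}h\,dx=\delta,\ h=0\text{ on }\partial D_n\}$. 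I would then pass to the limit in both sides, monitoring two separate cases according to whether the right-hand side limit is strictly positive or zero.

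First the left-hand side. By monotonicity of exit times, $\tau_n\uparrow\tau$ $\P_x$-a.s., so $\E_x\exp(\beta\tau_n)\uparrow\E_x\exp(\beta\tau)$ and by monotone convergence $\int_{D_n}(\E_x\exp(\beta\tau_n)-1)\,dx\uparrow\int_D(\E_x\exp(\beta\tau)-1)\,dx$ (the integrand is nonnegative; extend $\E_x\exp(\beta\tau_n)$ by $1$ off $D_n$, which is consistent since $\tau_n=0$ there). Hence the left-hand side of the displayed identity converges to $\beta/\int_D(\E_x\exp(\beta\tau)-1)\,dx$, with the usual convention that this is $0$ when the integral is $+\infty$. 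Next the right-hand side. For the direction ``$\le$'': given any $f_0\in\cN_{01}$, pick $m$ with $\supp(f_0)\subset D_m$; then $f_0$ restricted to $D_m$ lies in $\cN_1^{(m)}$, so $\inf_{f\in\cN_1^{(m)}}\e_{-\beta}(f,f)\le\e_{-\beta}(f_0,f_0)$, and combined with the fact that $\beta/(\int_{D_m}\E_x\exp(\beta\tau_m)dx-\text{vol}(D_m))\le\beta/(\int_D(\E_x\exp(\beta\tau)-1)dx)$ this yields, after taking the infimum over $f_0$ and applying $\vee\,0$, one inequality. For the reverse direction I would reproduce the test-function construction from Theorem \ref{vf_ht_unb}: on the subcase where the right-hand side is attained with a positive value, solve $(\beta+L)v=0$ on $D_n$ with $v=1$ on $\partial D_n$ (valid since $\beta<|\lambda_0(D_n)|$ holds for $n$ large because $\lambda_0(D_n)\downarrow\lambda_0(D)$), normalize $\bar v_{n}=(v_{n}-1)/(\int_{D_n}v_{n}dx-\text{vol}(D_n))\in\cN_{01}$, and check $\e_{-\beta}(\bar v_n,\bar v_n)=\beta/(\int_{D_n}v_n dx-\text{vol}(D_n))\to\beta/\int_D(\E_x\exp(\beta\tau)-1)dx$, giving $\inf_{f\in\cN_{01}}\e_{-\beta}(f,f)$ no larger than this limit.

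The genuinely delicate point is the interplay between the spectral threshold and the $\vee\,0$: for small $n$ one may have $\beta\ge|\lambda_0(D_n)|$, so $v_\beta$ is infinite on $D_n$ and the Poisson equation \eqref{epn_poi} has no finite solution there; only once $n$ is large enough that $|\lambda_0(D_n)|>\beta$ does the test function $\bar v_n$ exist. So I would split: if $|\lambda_0(D)|\le\beta$, then $|\lambda_0(D_n)|<|\lambda_0(D)|\le\beta$ wait---more carefully, $\lambda_0(D_n)\ge\lambda_0(D)$ so $|\lambda_0(D_n)|\le|\lambda_0(D)|\le\beta$ for all $n$, hence $v_\beta\equiv\infty$ on every $D_n$ and a fortiori on $D$, both sides are $0$ (using the eigenfunction $h$ of $\lambda_0(D)$ as in part (2) of Theorem \ref{expm}, suitably truncated into $\cN_{01}$ via a smooth cutoff, to witness $\inf_{f\in\cN_{01}}\e_{-\beta}(f,f)\le0$), and the identity reads $0=0$. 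If instead $|\lambda_0(D)|>\beta$, then for all sufficiently large $n$ we have $|\lambda_0(D_n)|>\beta$ (since $|\lambda_0(D_n)|\uparrow|\lambda_0(D)|$), the finite solutions $v_n$ exist, $\int_D(\E_x\exp(\beta\tau)-1)dx<\infty$, and the two-sided estimate above pins down the value with the $\vee\,0$ being inactive. The one technical lemma I would want to be careful about is the monotone convergence $|\lambda_0(D_n)|\uparrow|\lambda_0(D)|$ and the a.s.\ monotonicity $\tau_n\uparrow\tau$ with $\tau=+\infty$ allowed; both are standard, the latter from $\cup_n D_n=D$ and continuity of paths, so I do not expect real trouble there.
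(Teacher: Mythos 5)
Your overall plan — exhaust $D$ by bounded smooth domains, apply Theorem \ref{expm} on each $D_n$, pass to the limit by monotone convergence on the left, and use the normalized solutions $\bar v_n$ as test functions on the right — is exactly the paper's route, and the monotone convergence argument for the left-hand side is fine. However, there are several substantive errors in the details.

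\textbf{Monotonicity of $\lambda_0(D_n)$.} You get the direction wrong in three separate places. Since $D_n\subset D_{n+1}\subset D$ and $\tau_{D_n}\le\tau_{D_{n+1}}\le\tau_D$, the principal eigenvalue satisfies $\lambda_0(D_n)\le\lambda_0(D_{n+1})\le\lambda_0(D)<0$, so $\lambda_0(D_n)$ \emph{increases} to $\lambda_0(D)$ and $|\lambda_0(D_n)|$ \emph{decreases} to $|\lambda_0(D)|$. You wrote $\lambda_0(D_n)\ge\lambda_0(D)$, then ``$|\lambda_0(D_n)|\uparrow|\lambda_0(D)|$'', then ``$\lambda_0(D_n)\downarrow\lambda_0(D)$'' — all backwards. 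A consequence: when $\beta<|\lambda_0(D)|$ you have $\beta<|\lambda_0(D)|\le|\lambda_0(D_n)|$ for \emph{every} $n$, not only for large $n$, so Theorem \ref{expm} on $D_n$ always returns the infimum without the $\vee\,0$. More seriously, in the case $\beta\ge|\lambda_0(D)|$ your deduction ``$|\lambda_0(D_n)|\le\beta$ for all $n$, hence $v_\beta\equiv\infty$ on every $D_n$'' is false: one has $|\lambda_0(D_n)|\ge|\lambda_0(D)|$, and if $\beta=|\lambda_0(D)|$ then $v_{n,\beta}$ is finite on every $D_n$ while $v_\beta$ on $D$ is infinite.

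\textbf{Sign of the comparison on the left.} In the ``$\le$'' direction you write $\beta/(\int_{D_m}\E_x e^{\beta\tau_m}dx-\mathrm{vol}(D_m))\le\beta/(\int_D(\E_x e^{\beta\tau}-1)dx)$. Since $\int_{D_m}(\E_x e^{\beta\tau_m}-1)dx\le\int_D(\E_x e^{\beta\tau}-1)dx$, the reciprocals satisfy the opposite inequality. The chain you need is $\beta/\int_D(\E_x e^{\beta\tau}-1)dx\le\beta/\int_{D_m}(\E_x e^{\beta\tau_m}-1)dx=\inf_{\cN_1^{(m)}}\e_{-\beta}\le\e_{-\beta}(f_0,f_0)$, which is what gives the conclusion after taking the infimum over $f_0\in\cN_{01}$. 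As written, your chain of inequalities is broken.

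\textbf{The $\beta\ge|\lambda_0(D)|$ case.} Here you depart from the paper, proposing to take the principal eigenfunction on $D$ and truncate it into $\cN_{01}$ by a smooth cutoff. This has two gaps: for an unbounded domain $D$ the existence of a principal eigenfunction is not guaranteed (the paper only invokes Pinsky's Theorem 3.5.5 on bounded domains), and even if it exists, the cutoff introduces gradient terms on the annulus where the cutoff varies, so the clean identity $\e_{-\beta}(h,h)=(-\beta-\lambda_0)\int h^2dx\le0$ is lost after truncation. The paper instead picks a \emph{bounded} $D_m$ with $\beta>|\lambda_0(D_m)|$ and uses its principal eigenfunction $h$, which lies in $\cN_{01}$ with no truncation needed and satisfies $\e_{-\beta}(h,h)=(-\beta-\lambda_0(D_m))\int h^2dx\le0$ directly. (Such an $m$ exists whenever $\beta>|\lambda_0(D)|$, since $|\lambda_0(D_m)|\downarrow|\lambda_0(D)|$; the marginal case $\beta=|\lambda_0(D)|$ is instead handled by the $\bar v_n$ test functions, whose forms tend to $0$.) You should adopt this bounded-domain eigenfunction argument rather than the truncation.
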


\medskip
\begin{rem}
\begin{itemize}
\item[(1)] In \cite{KM99}, the variational principles for the mean $k$-th moments of the exit time of symmetric diffusions are obtained. In Theorem \ref{vf_ht_unb}, we generalize the variational principle for the mean exit time to non-symmetric diffusions. {As far as} we know, the variational principles for the Laplace transform of the exit time are new.

\item[(2)] In the context of Markov chains, \cite{HM17,HM18} obtain some variational principles for the hitting time of non-reversible ergodic Markov chains. Similar results of ergodic diffusions can be found in Section \ref{erg} below. Here we do not require the ergodicity of diffusion $X$.
\end{itemize}
\end{rem}

\medskip

We organize the rest of the paper as follows.
In Section \ref{apct}, we present {some} applications of our main result, Theorem \ref{vf_ht_unb}, with respect to the  comparison theorem and monotonicity law of the exit time.
Section \ref{mr} is devoted to the proof of Theorems \ref{vf_ht_unb} {and \ref{expm1}}  by introducing a general variational principle related to the Poisson's equations (see, \eqref{poi} below). We first derive the variational formulas for the exit time from bounded domains, and then we complete the proof of Theorems \ref{vf_ht_unb} {and \ref{expm1}}  using the approximation argument.
In Section \ref{erg}, we present extensions of our main results to ergodic diffusions.

\medskip

\noindent
{\bf Notation:}
For any $k\in \N_0$ and domain $D\subset \R^d$,
let $C^k(D)$ be the space of $k$ times continuously differentiable real-valued functions on $D$.
For any $\alpha\in(0, 1)$,   $C^{k,\alpha}(D)\subset C^k(D)$  that $k$th-order partial derivatives of $f\in C^{k,\alpha}(D)$ are locally H\"older continuous with exponent $\alpha$ in $D$.
For convenience, set $C^\alpha(D)=C^{0,\alpha}(D)$.
We denote by $C^k_0(D)$ the set of functions in $C^k(D)$ with compact support in $D$.
We use the notations $\text{vol}(A)=\int_A 1 \ {\rm d} x$ for  $A\subset \mathbb R^d$, $||v||$, $v\in \R^d$, for the Euclidean norm, and $B(n)\subset\R^d$ for the open ball with the radius $n$ centered at the origin.


\section{Applications}\label{apct}

We address the comparison theorems for the exit time of the diffusions associated with the drift $b\in \R^d$ of \eqref{L} in Section \ref{ct} as well as  associated with the coefficient matrices $a=(a_{ij})_{1\leq i,j\leq d}$ of \eqref{L} in Section \ref{ml}.


\subsection{Comparison theorems }\label{ct}

Let $a(x)=(a_{ij}(x))_{1\leq i,j\leq d},\ x\in\R^d$ be $d\times d$ strictly positive-definite matrices and $b(x)=(b_1(x),\cdots,b_d(x)),x\in\R^d$ be vectors on $\R^d$ with $a_{ij},b_i\in C^1(\R^d)$ and $\text{div}(b)=0$. 
 Denote
\begin{equation}\label{Lk}
L_{\gamma}=\nabla\cdot a\nabla+\gamma b\cdot\nabla,\quad \gamma\in\R.
\end{equation}
Then there exist diffusions $X^{(\gamma)}$ with infinitesimal generator $L_{\gamma}$ for $\gamma\in\R$.
Note that $X^{(0)}$ is symmetric diffusion since $L_{0}=\nabla\cdot a\nabla$ is self-adjoint with respect to the Lebesgue measure, while $X^{(\gamma)},\ \gamma\neq0$ are non-symmetric.
For any domain $D\subset \R^d$,
{denote by} $\tau_D^{(\gamma)}$ the exit time of diffusion $X^{(\gamma)}$ from $D$ and  $\lambda_0^{(\gamma)}(D)$ by the principal eigenvalue  associated to $L_{\gamma}$ which will be defined in Lemma \ref{spec} below.
By Theorem \ref{vf_ht_unb}, we obtain the comparison theorem between diffusions $X^{(\gamma)}$ according to $\gamma\in \R$.

\begin{thm}\label{comp_xk}
	Let $D\subset\R^d$ be a domain.
	Consider the operator $L_{\gamma}$ defined in \eqref{Lk} 
{satisfying {\bf Assumption A}} on any subdomain $D'\Subset D$, and $\lambda_0^{(\gamma)}(D)<0$.
Then
\begin{itemize}
\item[(1)] for any $\beta>0$,
$$
\int_D \big(1-\E_x[\exp(-\beta\tau_D^{(\gamma)})]\big){\rm d} x=\int_D\big( 1-\E_x[\exp(-\beta \tau_D^{(-\gamma)})]\big){\rm d} x.
$$
In particular, $\int_D\E_x [\tau_D^{(\gamma)} ]{\rm d} x= \int_D\E_x  [\tau_D^{(-\gamma)}] {\rm d} x$.

\item[(2)] For fixed $\beta>0$,
$\int_D \big(1- \E_x[\exp(-\beta\tau_D^{(\gamma)})]\big){\rm d} x$ and $\int_D\E_x [\tau_D^{(\gamma)}] {\rm d} x$ are non-increasing for $\gamma\in[0,\infty)$.
\end{itemize}
\end{thm}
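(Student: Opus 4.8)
The plan is to derive both parts from the variational principle in Theorem~\ref{vf_ht_unb}(1), exploiting the fact that the bilinear form $\e_\beta$ splits into a symmetric piece coming from $L_{(0)}=\nabla\cdot a\nabla$ and an antisymmetric piece coming from the drift $\gamma b\cdot\nabla$. First I would observe that, since $\mathrm{div}(b)=0$, the drift operator $b\cdot\nabla$ is skew-symmetric on $C_0^2(\bar D)$ with respect to the Lebesgue measure: integration by parts gives $\int_D f\,(b\cdot\nabla g)\,dx=-\int_D g\,(b\cdot\nabla f)\,dx$ for $f,g\in C_0^2(\bar D)$, because the boundary terms vanish and $\int_D (\nabla\cdot(fgb))\,dx=0$. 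Consequently, writing $\e_\beta^{(\gamma)}$ for the form associated with $L_{(\gamma)}$,
\[
\e_\beta^{(\gamma)}(f-g,f+g)=\e_\beta^{(0)}(f-g,f+g)-\gamma\int_D (f-g)\,b\cdot\nabla(f+g)\,dx,
\]
and a short computation using skew-symmetry shows the cross term equals $-2\gamma\int_D g\,(b\cdot\nabla f)\,dx$, which is odd in $\gamma$ and also odd under the swap $(f,g)\mapsto(f,-g)$.

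For part~(1), the key point is that the map $g\mapsto -g$ is a bijection of $\cN_{00}$ onto itself (it preserves the constraint $\int_D h\,dx=0$ and the support condition), and under this substitution together with $\gamma\mapsto-\gamma$ the functional $\e_\beta^{(\gamma)}(f-g,f+g)$ is invariant: the symmetric part $\e_\beta^{(0)}(f-g,f+g)=\e_\beta^{(0)}(f,f)-\e_\beta^{(0)}(g,g)$ is unchanged, and the cross term $-2\gamma\int_D g\,(b\cdot\nabla f)\,dx$ changes sign twice. Hence $\sup_{g\in\cN_{00}}\e_\beta^{(\gamma)}(f-g,f+g)=\sup_{g\in\cN_{00}}\e_\beta^{(-\gamma)}(f-g,f+g)$ for every $f\in\cN_{01}$, so the infima over $f$ coincide, and Theorem~\ref{vf_ht_unb}(1) forces $\int_D(1-\E_x\exp(-\beta\tau_D^{(\gamma)}))\,dx=\int_D(1-\E_x\exp(-\beta\tau_D^{(-\gamma)}))\,dx$. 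The statement for $\int_D\E_x\tau_D^{(\gamma)}\,dx$ follows identically from Theorem~\ref{vf_ht_unb}(2) with $\e=\e_0$, or alternatively by letting $\beta\downarrow0$.

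For part~(2), fix $\beta>0$ and $0\le\gamma_1\le\gamma_2$. By part~(1) it suffices to treat $\gamma\in[0,\infty)$, and the quantity $\beta/\int_D(1-\E_x\exp(-\beta\tau_D^{(\gamma)}))\,dx$ equals $\inf_{f\in\cN_{01}}\sup_{g\in\cN_{00}}\e_\beta^{(\gamma)}(f-g,f+g)$; I want to show this is non-decreasing in $\gamma\ge0$, which gives the claim after inverting. The inner supremum is
\[
\sup_{g\in\cN_{00}}\Big(\e_\beta^{(0)}(f,f)-\e_\beta^{(0)}(g,g)-2\gamma\!\int_D g\,(b\cdot\nabla f)\,dx\Big),
\]
and the term being maximized over $g$, namely $-\e_\beta^{(0)}(g,g)-2\gamma\int_D g\,(b\cdot\nabla f)\,dx$, is a concave quadratic in $g$ whose maximum value is non-decreasing in $|\gamma|$ (it is the supremum of an affine family indexed by $\gamma$ through the linear term, shifting the optimal $g$; concretely, replacing $g$ by $-g$ shows the supremum depends only on $|\gamma|$, and a scaling/completion-of-square argument shows it increases with $|\gamma|^2$). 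Taking the infimum over $f$ preserves monotonicity in $\gamma$. The main obstacle here is making the ``maximum over $g$ increases with $|\gamma|$'' step rigorous within the function space $\cN_{00}$, where one cannot freely complete the square because the quadratic form $\e_\beta^{(0)}$ need not be coercive on all of $C_0^2(\bar D)$ and the admissible $g$ are constrained; I expect to handle this by a comparison at the level of the optimizers on each bounded approximating domain $D_n$ (where $\lambda_0^{(\gamma)}(D_n)<0$ guarantees positivity of $\e_\beta^{(0)}$ on $\cN_0^{(n)}$) and then pass to the limit, exactly as in the proof of Theorem~\ref{vf_ht_unb}. Equivalently, and perhaps more cleanly, one can argue directly on the PDE side: $\int_D\E_x\tau_D^{(\gamma)}\,dx$ and its Laplace analogue can be compared using that the symmetric part of $L_{(\gamma)}$ is independent of $\gamma$ while the skew part only contributes non-negatively to the Dirichlet-type functional after the variational reformulation, and I would present whichever of these two routes yields the shortest rigorous argument.
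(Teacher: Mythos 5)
Your part~(1) is correct and follows essentially the paper's own argument: expand the bilinear form $\e_\beta^{(\gamma)}(f-g,f+g)$ into the symmetric piece from $\nabla\cdot a\nabla$ plus the drift contribution, use $\mathrm{div}(b)=0$ to annihilate the diagonal drift terms, and use that $g\mapsto -g$ is a bijection of $\cN_{00}$ to trade $\gamma$ for $-\gamma$ in the only $\gamma$-dependent, $g$-odd term. One small slip: the cross term is $+2\gamma\int_D g\,(b\cdot\nabla f)\,dx$, not $-2\gamma\int_D g\,(b\cdot\nabla f)\,dx$; the sign is immaterial here since the term is odd in both $\gamma$ and $g$.

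For part~(2) you have identified the right structure but then worry about a difficulty that does not actually arise, and you propose an unnecessary detour to deal with it. Writing $C(f,g):=\int_D g\,(b\cdot\nabla f)\,dx$ and extracting the $\gamma$-independent term $\e_\beta^{(0)}(f,f)$, the inner supremum is
\[
\sup_{g\in\cN_{00}}\Bigl[-\e_\beta^{(0)}(g,g)+2\gamma\, C(f,g)\Bigr].
\]
No coercivity, no existence of an optimizer, and no completion of squares is required. Since $\cN_{00}$ is closed under $g\mapsto -g$ and the quadratic part is even in $g$, for $\gamma\ge 0$ this supremum equals the supremum over the subset $\{g\in\cN_{00}:C(f,g)\ge 0\}$; on that subset each map $\gamma\mapsto -\e_\beta^{(0)}(g,g)+2\gamma\,C(f,g)$ is non-decreasing on $[0,\infty)$, and a pointwise supremum of non-decreasing functions is non-decreasing. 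Taking $\inf_{f\in\cN_{01}}$ preserves monotonicity, and inverting yields part~(2). (Equivalently, as you half-observe: the inner supremum is a supremum of affine functions of $\gamma$, hence convex in $\gamma$, and it is even in $\gamma$ by the $g\mapsto-g$ substitution; a convex even function of $\gamma$ is non-decreasing on $[0,\infty)$. This cleanly replaces the ``increases with $|\gamma|^2$'' heuristic, which is both unnecessary and not literally justified, since the supremum need not be quadratic in $\gamma$.) The approximation through bounded $D_n$ and the ``PDE-side'' alternative you sketch as fallbacks are not needed --- this simple pointwise argument is exactly the one the paper gives.
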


\begin{proof}
(1) By \eqref{vf_htunb} in Theorem \ref{vf_ht_unb}, for any $\gamma\in\R$ and $\beta>0$ we obtain that
\begin{align}\label{vf_Lk}
&\quad\frac{\beta}{\int_D \big(1-\E_x[\exp(-\beta \tau_D^{(\gamma)})]\big){\rm d} x}\nn\\
&=\inf_{f\in\cN_{1}}\sup_{g\in \cN_{0}}\int_D\big(f(x)-g(x)\big)\big((\beta-L_{\gamma})(f+g)\big)(x){\rm d} x\nn\\
&=\inf_{f\in\cN_{1}}\sup_{g\in \cN_{0}}\left\{\int_D f(x)(\beta f-\nabla\cdot a\nabla f)(x){\rm d} x-\int_D g(x)(\beta g-\nabla\cdot a\nabla g)(x){\rm d} x\right.\\
&\quad \left.+2\gamma\int_D g(x) (b\cdot\nabla f)(x) {\rm d} x\right\}\nn\\
&=\inf_{f\in\cN_{1}}\sup_{g\in \cN_{0}}\left\{\int_D f(x)(\beta f-\nabla\cdot a\nabla f)(x){\rm d} x-\int_D g(x)(\beta g-\nabla\cdot a\nabla g)(x){\rm d} x\right.\nn\\
&\quad \left.+(-2\gamma)\int_D g(x) (b\cdot\nabla f)(x) {\rm d} x\right\}\nn\\
&=\frac{\beta}{\int_D \big(1-\E_x[\exp(-\beta \tau_D^{(-\gamma)})]\big){\rm d} x},\nn
\end{align}
{where we used the fact that $\int_D f(x)(b\cdot\nabla f)(x){\rm d} x=\int_D g(x)(b\cdot\nabla g)(x){\rm d} x=0$  by (A3) in the second equality},  and replaced $g$ by $-g$ in the third equality. This gives us the first assertion of (1). Similarly, the second assertion of (1) comes from \eqref{vf-meanunb}.

(2) For fixed $f\in\cN_{1}$, denote
$$\cN_{0,f}=\left\{g\in\cN_{0}:\int_Dg(x)(b\cdot\nabla f)(x){\rm d} x\ge 0\right\}.$$
We claim that the supremum in \eqref{vf_Lk} is equal to
$$
\sup_{g\in \cN_{0,f}}\left\{\int_D f(x)(\beta f-\nabla\cdot a\nabla f)(x){\rm d} x-\int_D g(x)(\beta g-\nabla\cdot a\nabla g)(x){\rm d} x+2\gamma\int_D g(x) (b\cdot\nabla f)(x) {\rm d} x\right\}.
$$
Therefore, from it we can obtain the first assertion in (2) immediately. To see that, consider $g\in\cN_{0}$ with $\int_Dg(x)(b\cdot\nabla f)(x){\rm d} x< 0$ and $\gamma\geq0$. Then $-g\in\cN_{0,f}$ and
$$
\aligned
&\quad\int_D f(x)(\beta f-\nabla\cdot a\nabla f)(x){\rm d} x-\int_D g(x)(\beta g-\nabla\cdot a\nabla g)(x){\rm d} x +2\gamma\int_D g(x) (b\cdot\nabla f)(x){\rm d} x\\
&<\int_D f(x)(\beta f-\nabla\cdot a\nabla f)(x){\rm d} x-\int_D (-g)(x)\big(\beta (-g)-\nabla\cdot a\nabla (-g)\big)(x){\rm d} x \\
&\quad +2\gamma\int_D (-g)(x) (b\cdot\nabla f)(x) {\rm d} x,
\endaligned
$$
so we validate our claim.
By a similar argument we obtain the second assertion in (2).
\end{proof}

\begin{rem}
Diffusions with growing drift have been studied, such as the behaviors of the spectral gap and the asymptotic variance under growing drift (see e.g. \cite{DLP16,FHPS10,HHS93,HNW15}). As we presented in Theorem \ref{comp_xk}, the behaviors of the Laplace transform of the exit time and the mean exit time have similar monotonicity properties.
\end{rem}


\subsection{Monotonicity law}\label{ml}

In this subsection, we consider two operators:
\be\label{L12}
L_{a_1}=\nabla\cdot(a_1\nabla)\quad \text{and}\quad
L_{a_2, b}=\nabla\cdot(a_2\nabla)+b\cdot\nabla,
\de
where $a_k(x)$, $k=1,2$, $x\in\R^d$, are $d\times d$ strictly positive-definite matrices and $b(x)$, $x\in\R^d$, are vectors on $\R^d$. Similar to Section \ref{ct}, under mild conditions, there exist diffusions $X^{(a_1)}$ and $X^{(a_2,b)}$ associated to $L_{a_1}$ and $L_{a_2,b}$, respectively.
For any domain $D\subset \R^d$, we denote by $\tau_{D}^{(a_1)}$ and $\tau_{D}^{(a_2,b)}$  the exit times for diffusions $X^{(a_1)}$ and $X^{(a_2,b)}$ from $D$, and
 denote by $\lambda_0^{(a_1)}(D)$ and $\lambda_0^{(a_2, b)}(D)$ the principal eigenvalues related to $L_{a_1}$ and $L_{a_2, b}$, respectively.
Using Theorem \ref{vf_ht_unb}, we obtain the following comparison theorem between diffusions $X^{(a_1)}$ and $X^{(a_2,b)}$.

\begin{thm}\label{T:mon}
Let $D\subset\R^d$ be a domain.
Consider the operator $L_{a_1}$ and $L_{a_2,b}$ defined as in \eqref{L12} satisfying {\bf Assumption A} on any subdomain $D'\Subset D$, and $\lambda_0^{(a_1)}(D), \lambda_0^{(a_2, b)}(D)<0$.
Additionally, we assume that 
$a_1\leq a_2$ in the sense that $a_2(x)-a_1(x)$ is non-negative definite for all $x\in\R^d$. Then for any $\beta>0$ we have
$$
\int_D \big(1- \E_x[\exp(-\beta\tau_{D}^{(a_2,b)})]\big){\rm d} x\leq \int_D  \big(1- \E_x[\exp(-\beta\tau_{D}^{(a_1)})]\big){\rm d} x
$$
and $\int_D\E_x[\tau_{D}^{(a_2,b)}] {\rm d} x\leq \int_D\E_x[\tau_{D}^{(a_1)}] {\rm d} x$. If in addition $b=0$, then
\begin{equation}\label{cr-exp}
\int_D\big(\E_x[\exp(\beta\tau_D^{(a_2,0)})]-1\big){\rm d} x\leq \int_D\big(\E_x[\exp(\beta\tau_D^{(a_1)})]-1\big) {\rm d} x.
\end{equation}
\end{thm}
\begin{proof}
Fix $\beta>0$. By \eqref{vf_htunb}, we obtain that
$$
\aligned
\frac{\beta}{\int_D\big(1- \E_x[\exp(-\beta\tau_{D}^{(a_2,b)})]\big){\rm d} x}&=\inf_{f\in\cN_{1}}\sup_{g\in \cN_{0}}\int_D \big(f(x)-g(x)\big)\big((\beta-L_{a_2,b})(f+g)\big)(x){\rm d} x\\
&\geq\inf_{f\in\cN_{1}}\int_D f(x)\big((\beta-L_{a_2,b}) f\big)(x){\rm d} x\\
&=\inf_{f\in\cN_{1}}\left\{\beta\int_Df^2(x)dx+\int_D \nabla f(x)\cdot a_2(x)\nabla f(x){\rm d} x\right\}\\
&\geq\inf_{f\in\cN_{1}}\left\{\beta\int_Df^2(x)dx+\int_D \nabla f(x)\cdot a_1(x)\nabla f(x){\rm d} x\right\}\\
&=\frac{\beta}{\int_D \left(1-\E_x[\exp(-\beta\tau_{D}^{(a_1)})]\right){\rm d} x}.
\endaligned
$$
Similarly, by \eqref{vf-meanunb} we also get $\int_D\E_x[\tau_{D}^{(a_2,b)}] {\rm d} x\leq \int_D\E_x[\tau_{D}^{(a_1)}] {\rm d} x$.

If in addition $b=0$, then both of $L_{a_1},L_{a_2, 0}$ are self-adjoint with respect to the Lebesgue measure. Hence, applying similar argument as above to Theorem \ref{expm1}, we obtain \eqref{cr-exp}.
\end{proof}

As a direct result of Theorem \ref{T:mon}, we obtain the monotonicity of variational formula for the exit time of diffusions $X^{(\varepsilon a_1)}$, associated to $L_{\varepsilon a_1}$, according to $\varepsilon>0$.

\begin{cor}\label{c:mon}
Let $D\subset \R^d$ be a domain. For any $\varepsilon>0$, we assume that the operator $L_{\varepsilon a_1}$,  defined in \eqref{L12} by replacing $a_1$ with $\varepsilon a_1$, satisfies {(A1) and (A2) in }{\bf Assumption A} on any subdomain $D'\Subset D$ and $\lambda_0^{(\varepsilon a_1)}(D)<0$.
 Then for any $\beta>0$,
	$$\int_D \big(1- \E_x[\exp(-\beta\tau_D^{(\varepsilon a_1)})]\big){\rm d} x,\quad \int_D\E_x[\tau_D^{(\varepsilon a_1)}]{\rm d} x\ \text{and }\
	\int_D\big(\E_x[\exp(\beta\tau_D^{(\varepsilon a_1)})]-1\big) {\rm d} x$$
	are non-increasing for $\varepsilon\in(0,\infty)$.
\end{cor}


\section{Proof of Theorems \ref{vf_ht_unb} and \ref{expm1}}\label{mr}

To prove the main result, we need some preparations.
Fix {a} domain $D\subset\R^d$. Let $L$ be the operator defined in \eqref{L} and let $X=(X_t)_{t\geq0}$ be the corresponding diffusion.
For $k\in \N_0$ and $\alpha\in(0,1)$, a domain $D\subset\R^d$ is said to have a $C^{k,\alpha}$-boundary if for each $x_0\in\partial D$, there is a ball $B$ centered at $x_0$ and a one-to-one mapping $\psi\in C^{k,\alpha}(B)$ from $B$ onto a set $A\subset\R^d$ such that
$$
\psi(B\cap D)\subset\{x\in \R^d:x_d>0\},\,\,
\psi(B\cap \partial D)\subset\{x\in\R^d:x_d=0\}
\,\,\text{and}\,\,
\psi^{-1}\in C^{k,\alpha}(A).
$$
Let
$\mathcal{B}_\alpha=\{f\in C^\alpha(\bar{D}):f=0\ \text{on}\ \partial D\}$ be the function space with a norm $||f||_{\alpha;D}:=\sup_{D}|f|+\sup_{x,y\in D,x\neq y}\frac{|f(x)-f(y)|}{|x-y|^\alpha}$, and
$$
\mathcal{D}_\alpha=\{f\in C^{2,\alpha}(\bar{D})\cap\mathcal{B}_\alpha:Lf=0\ \text{on}\ \partial D\}.
$$
{It can be checked that} the formal adjoint $(\widetilde{L},\widetilde{\mathcal{D}}_\alpha)$  of $(L,\mathcal{D}_\alpha)$ is given by 
$$
\widetilde{L}f=\nabla\cdot(a\nabla f)-b\cdot\nabla f-\text{div}(b)f\quad\text{for } f\in\widetilde{\mathcal{D}}_\alpha.
$$

\medskip

{We now} have the following properties for the spectrum of $(L,\mathcal{D}_\alpha)$ and $(\widetilde{L},\widetilde{\mathcal{D}}_\alpha)$ by \cite[Proposition 3.5.4 and Theorem 3.6.1]{Pi95}.
\begin{lem}\label{spec}
Let $D\subset\R^d$ be a bounded domain  with {$C^{2,\alpha}$-boundary} for some $\alpha\in(0,1)$.
Assume that $L$ is the operator defined in \eqref{L} satisfying {\bf Assumption A} on $D$.
Then
\begin{itemize}

\item[\rm(1)] the spectrum of $(L,\mathcal{D}_\alpha)$, denoted by $\sigma((L,\mathcal{D}_\alpha))$, consists only of eigenvalues;

\item[\rm(2)]
the principal eigenvalue  $$\lambda_0(D)=\sup\text{Re}\big(\sigma((L,\mathcal{D}_\alpha))\big)=\lim_{t\rightarrow\infty}\frac{1}{t}\log\sup_{x\in D}\mathbb{P}_x (\tau_D>t);$$

\item[\rm(3)]
$\lambda_0(D)=\sup\text{Re}\big(\sigma((\widetilde{L},\widetilde{\mathcal{D}}_\alpha))\big)$.
\end{itemize}

\end{lem}

\medskip

For $\beta\geq0$ and $\xi\in C^\alpha(\bar{D})$, we consider a pair of Poisson's equations as follows:

\begin{equation}\label{poi}
\begin{cases}
(\beta-L) u=\xi,&\text{in } D;\\
u=0, &\text{on }\partial D,
\end{cases} \quad  \text{and}\quad
\begin{cases}
(\beta-\widetilde{L}) u=\xi,&\text{in } D;\\
u=0, &\text{on }\partial D.
\end{cases}
\end{equation}
Then the next lemma which comes from \cite[Theorem 3.3.1]{Pi95} guarantees the {existence} of the solutions for the above Poisson's equations.

\begin{lem}\label{dp_slt}
Let $D\subset\R^d$ be a bounded domain  with {$C^{2,\alpha}$-boundary} for some $\alpha\in(0,1)$.
	Assume that $L$ is the operator defined in \eqref{L} satisfying {\bf Assumption A} on $D$. Then for any $\beta\geq0$ and $\xi\in C^\alpha(\bar{D})$, there exist unique solutions $u_\beta,\ \widetilde{u}_\beta\in C^{2,\alpha}(\bar{D})$ to Poisson's equations \eqref{poi}.
\end{lem}

\medskip
For $\beta>0$ (resp. $\beta=0$), let $\xi\equiv\beta$ (resp. $\xi\equiv1$) in Lemma \ref{dp_slt}.
Then the solutions of \eqref{poi} are
\begin{align*}
 u_\beta(x)&=1-\E_x[\exp(-\beta\tau_D)]\  (\text  {resp. } u_0(x)=\E_x[\tau_D]) \quad \text{and }\\
 \widetilde{u}_\beta(x)&=1-\E_x[\exp(-\beta\widetilde{\tau}_D)]\ (\text {resp. } \widetilde{u}_0(x)=\E_x[\widetilde{\tau}_D])\quad  \text{respectively, \quad  for $x\in\R^d$.}
\end{align*}
Here we use the notation $\widetilde{\tau}_\cdot$ for the exit time of diffusion associated to $\widetilde{L}$.


\subsection{The exit time on smooth bounded domains}\label{bd}

In this subsection, we  consider the case that $D\subset \R^d$ is a bounded domain with $C^{2,\alpha}$-boundary for some $\alpha\in(0,1)$. Let
$$
 \cN_\delta^\xi=\left\{h\in C^2(\bar{D}): h|_{\partial D}=0\ \text{and }\int_D h\,\xi {\rm d} x=\delta\right\}\qquad \text{for }\delta=0, 1.
$$
With the help of these function spaces, we obtain a variational formula for Poisson's equations {\eqref{poi} as follows.

\begin{prop}\label{main_g}
Let $D\subset \R^d$ be a bounded domain with $C^{2,\alpha}$-boundary for some $\alpha\in(0,1)$. Assume that $L$ is the operator defined in \eqref{L} satisfying {\bf Assumption A} on $D$ {and $\lambda_0( D)<0$}. { For any $\beta\geq0$ and non-zero $\xi\in C^\alpha(\bar{D})$}, denote by $u_\beta$, $\widetilde{u}_\beta$ the solutions of Poisson's equations \eqref{poi} respectively.
	Then
	\be\label{df_xi}
	1/\e_\beta(\widetilde{u}_\beta,u_\beta)
	=\inf_{f\in {\cN_1^\xi}}
	\sup_{g\in {\cN_0^\xi}}
	\e_\beta(f-g,f+g).
	\de
In particular, if $L$ is self-adjoint with respect to the Lebesgue measure, i.e., $b=0$, then \eqref{df_xi} {can be} reduced to
\be\label{df_xi_r}
1/\e_\beta(\widetilde{u}_\beta,u_\beta)=\inf_{f\in  \cN_1^\xi}\e_\beta(f,f).
\de
\end{prop}

\begin{proof}
Fixed $\beta\geq0$. { From Lemma \ref{dp_slt}, we can see that the solutions $0\neq u_\beta,\widetilde{u}_\beta\in C^{2,\alpha}(\bar{D})$ of Poisson's equations \eqref{poi} are well defined.}
Since $\widetilde{L}$ is the adjoint operator of $L$ with respect to the Lebesgue measure, we have
$$
\int_D \widetilde{u}_\beta(\beta-L) u_\beta {\rm d} x=\int_D u_\beta(\beta-\widetilde{L})\widetilde{u}_\beta {\rm d} x {=\int_D u_\beta(\beta-L)u_\beta {\rm d} x},
$$
which implies that
\be\label{ip}
{\e_\beta(u_\beta,u_\beta)=}\e_\beta(\widetilde{u}_\beta,u_\beta)=\int_D \widetilde{u}_\beta \xi {\rm d} x=\int_D u_\beta\xi {\rm d} x.
\de
{It is worth emphasizing that from $\beta\geq0>\lambda_0(D)$ and Assumption (A3),  it can be verified that each term in \eqref{ip} is positive.}

We now set $w_\beta=u_\beta\big(\int_D u_\beta\xi {\rm d} x\big)^{-1}$ and $\widetilde{w}_\beta=\widetilde{u}_\beta\big(\int_D \widetilde{u}_\beta\xi {\rm d} x\big)^{-1}$. Then it is easy to check that
\be
\bar{w}_\beta:=(w_\beta+\widetilde{w}_\beta)/2\in \cN_1^\xi \quad \text{and}\quad \hat{w}_\beta:=(w_\beta-\widetilde{w}_\beta)/2\in \cN_0^\xi.\nn
\de
For any $f\in\cN_1^\xi$, set $f_1=f-\bar{w}_\beta$. It can be checked that $f_1\in \cN_0^\xi$ since $\bar{w}_\beta\in \cN_1^\xi$. It follows this fact and \eqref{ip} that
\begin{equation}\label{geq2}
\e_\beta(\widetilde{w}_\beta,w_\beta)=1/\e_\beta(\widetilde{u}_\beta,u_\beta)\quad \text{and}\quad
\e_\beta(\widetilde{w}_\beta,f_1)=\frac{1}{\e_\beta(\widetilde{u}_\beta,u_\beta)}\int_D f_1\xi {\rm d} x=\e_\beta(f_1,w_\beta)=0.
\end{equation}
{In addition, using (A2) and (A3) again, we can see that 
$$
\e_\beta(f_1,f_1)=\beta \int_D f_1^2{\rm d}x+\e(f_1,f_1)\geq \e(f_1,f_1) =\int_D \nabla fa\cdot\nabla f{\rm d} x\geq 0.
$$
Combining this with \eqref{geq2},} we arrive at 
$$
\aligned
\e_\beta(f-\hat{w}_\beta,f+\hat{w}_\beta)&=\e_\beta(\widetilde{w}_\beta+f_1,w_\beta+f_1)\\	&=\e_\beta(\widetilde{w}_\beta,w_\beta)+\e_\beta(\widetilde{w}_\beta,f_1)+\e_\beta(f_1,w_\beta)+\e_\beta(f_1,f_1)\\
&\geq 1/\e_\beta(\widetilde{u}_\beta,u_\beta),
\endaligned
$$
which implies that
\be\label{leq}
	1/\e_\beta(\widetilde{u}_\beta,u_\beta)
	\leq\inf_{f\in \cN_1^\xi}\sup_{g\in\cN_0^\xi}\e_\beta(f-g,f+g).
	\de

Similarly, for any $g\in\cN_0^\xi$, let $g_1=g-\hat{w}_\beta$. Then $g_1\in \cN_0^\xi$ by the fact $\hat{w}_\beta\in \cN_0^\xi$.
Replacing $f_1$ by $g_1$ in \eqref{geq2}, the same argument as above yields that
\begin{align}\label{geq1}	\e_\beta(\bar{w}_\beta-g,\bar{w}_\beta+g)&=\e_\beta(\bar{w}_\beta-\hat{w}_\beta-g_1,\bar{w}_\beta+\hat{w}_\beta+g_1)\nn\\ &=\e_\beta(\widetilde{w}_\beta,w_\beta)+\e_\beta(\widetilde{w}_\beta,g_1)-\e_\beta(g_1,w_\beta)-\e_\beta(g_1,g_1)\nn\\
&\leq 1/\e_\beta(\widetilde{u}_\beta,u_\beta),
\end{align}
and this implies that
\be\label{geq}
	1/\e_\beta(\widetilde{u}_\beta,u_\beta)
	\geq\inf_{f\in \cN_1^\xi}\sup_{g\in\cN_0^\xi}\e_\beta(f-g,f+g).
	\de
Combining \eqref{leq} and \eqref{geq}, we obtain the first assertion.
In particular, if $b=0$, i.e., $L=\widetilde{L}$, for any $f\in\cN_1^\xi$ and $g\in \cN_0^\xi$, we have
$$
\e_\beta(f-g,f+g)=\e_\beta(f,f)-\e_\beta(g,g)\leq \e_\beta(f,f).
$$
Thus we get \eqref{df_xi_r} from \eqref{df_xi} immediately.
\end{proof}

\begin{rem}
There are some variational principles of non-symmetric diffusions in the literature. {In particular, \cite[Chapters 3 and 4]{Pi95}} provides a variational formula for {the principal eigenvalue} of non-symmetric diffusions. In \cite{LMS19}, the variational principle for the capacity of non-symmetric diffusions is obtained. Here we give variational formulas \eqref{df_xi} and \eqref{df_xi_r} for Poisson's equations \eqref{poi}.  Using that, we can give proofs of Theorems \ref{vf_ht_unb} {and \ref{expm1}} when $D$ is a bounded domain as below.
\end{rem}

\medskip\noindent
{\bf Proof of Theorem \ref{vf_ht_unb} for bounded domains}.
For any $\beta>0$, by \cite[Theorem 3.6.4]{Pi95} (see also \cite[Theorem 5.12]{Du15}),
$(1-\E_x[\exp(-\beta\tau_D)]:x\in \R^d)$ is the unique solution of Poisson's equation
$$
\begin{cases}
(\beta-L) u=\beta,& \text{in } D;\\
u=0,&\text{on }\partial D.
\end{cases}
$$
{Thus} $u_\beta:=((1-\E_x[\exp(-\beta\tau_D)])/\beta:x\in \R^d)$ is the unique solution of equation
\begin{equation}\label{hteq2}
\begin{cases}
(\beta-L) u=1,& \text{in } D;\\
u=0,&\text{on }\partial D.
\end{cases}
\end{equation}
Also \cite[Theorem 3.6.4]{Pi95} yields that
$(\E_x[\tau_D]:x\in\R^d)$ is the unique solution of Poisson's equation
\begin{equation}\label{hteq1}
\begin{cases}
-Lu=1,& \text{in } D;\\
u=0,&\text{on }\partial D,
\end{cases}
\end{equation}
Applying Proposition \ref{main_g} and \eqref{ip} to \eqref{hteq2} and \eqref{hteq1}, we obtain the desired results.\qed

\medspace

To prove Theorem \ref{expm1}, assume that $b=0$ in the rest of this subsection. For any $\beta>0$, define
\begin{align}\label{sol:poi_symm}
v_\beta(x)=\E_x[\exp(\beta\tau_D)], \quad x\in\R^d.
\end{align}
It is known that $v_\beta$ is a finite-valued function when $\beta<-\lambda_0(D)$. Furthermore, it is the unique solution to Poisson's equation
\begin{equation}\label{epn_poi}
\begin{cases}
(\beta+L)u=0,\quad&\text{in }D;\\
u=1,\quad&\text{on }\partial D.
\end{cases}
\end{equation}
If $\beta\geq-\lambda_0(D)$, then $v_\beta(x)=\infty$, $x\in D$(see e.g. \cite{Fr73,Kh59}).

\medspace

\noindent
{\bf Proof of Theorem \ref{expm1} for bounded domains}.
We first consider the case that $\beta<-\lambda_0(D)$.
Then $v_\beta$ defined in \eqref{sol:poi_symm} is the unique solution of equation \eqref{epn_poi}. So by a straight-forward computation we can see that $\bar{v}_\beta:=(v_\beta-1)/(\int_D v_\beta  {\rm d} x-\text{vol}(D))$ is the unique solution to
	$$
	\begin{cases}
	(\beta+L)u=-\beta/{\big(\int_D v_\beta  {\rm d} x-\text{vol}(D)\big)},\ &\text{in }D;\\
	u=0,\ &\text{on }\partial D.
	\end{cases}
	$$
Therefore, we have
\begin{equation}\label{epn_v}
	\e_{-\beta}(\bar{v}_\beta,\bar{v}_\beta)=\frac{\beta}{\int_D{v}_\beta {\rm d} x-\text{vol}(D)}
	=\frac{\beta}{\int_D\E_x[\exp(\beta\tau_D)]dx-\text{vol}(D)}.
\end{equation}
For any $f\in\cN_1$, let $f_1=f-\bar{v}_{\beta}$. It is obvious that $f_1 \in\cN_0$ by $\bar{v}_{\beta}\in \cN_1$. Combining this and the fact that $L$ is self-adjoint with respect to the Lebesgue measure,  $\e_{-\beta}(f_1,\bar{v}_\beta)=\e_{-\beta}(\bar{v}_\beta,f_1)=0$. Thus we have
\begin{equation}\label{eq:lower}
	\e_{-\beta}(f,f)=	\e_{-\beta}(f_1+\bar{v}_\beta,f_1+\bar{v}_\beta)=	\e_{-\beta}(f_1,f_1)+\e_{-\beta}(\bar{v}_\beta,\bar{v}_\beta)\geq \e_{-\beta}(\bar{v}_\beta,\bar{v}_\beta).
	\end{equation}	
In the last inequality above, we used the fact that $\e_{-\beta} (f_1,f_1)\geq0$ for $f_1\in\cN_0$ if $\beta<|\lambda_0(D)|$.
So we obtain our assertion by \eqref{epn_v} and \eqref{eq:lower} when  $\beta<|\lambda_0(D)|$.

\medskip
Assume now that  $\beta\geq |\lambda_0(D)|$. Then $v_\beta\equiv\infty$ by \cite[Lemma 1.1]{Fr73} and thus
\begin{equation}\label{beta>}
 \frac{\beta}{\int_Dv_\beta {\rm d} x-\text{vol}(D)}=0.
\end{equation}
Let $h$ be the solution to
$$
	\begin{cases}
	\big(-\lambda_0(D) +L\big)u=0,\ &\text{in }D;\\
	u=0,\ &\text{on }\partial D,
	\end{cases}
	$$
satisfying $\int_D h dx=1$. Here  the existence of $h$ is guaranteed by \cite[Theorem 3.5.5]{Pi95}.
Then $h\in \cN_1$ and
\begin{align*}
		\e_{-\beta}(h,h)=\big(-\beta-\lambda_0(D)\big)\int_D h^2{\rm d} x\leq 0 = \frac{\beta}{\int_Dv_\beta {\rm d} x-\text{vol}(D)},
\end{align*}
which gives the desired result.\qed

\subsection{The exit time on arbitrary domains}\label{unbd}

In this subsection, we turn to arbitrary domain $D\subset \R^d$ which is not necessarily bounded. We will use a approximation arguments to complete the proof of Theorems \ref{vf_ht_unb} and \ref{expm1}.

\medskip
\noindent
{\bf Proof of Theorem \ref{vf_ht_unb} for arbitrary domains}.
We only need to prove the assertions for the Laplace transform of the exit time since that for the mean exit time follows a similar path.
Let $\{D_n\}_{n=1}^\infty$ be a sequence of bounded domains with $C^{2,\alpha}$-boundaries such that $D_n\subset D_{n+1}$ and $\cup_{n=1}^\infty  D_n=D$.
Denote by $\tau_n=\inf\{t\geq0:X_t\notin D_n\}$ the exit time from $D_n$.
Let
$$\cN_\delta^{(n)}=\left\{h\in C^2(\bar{D}_n): h|_{\partial D_n}=0\ \text{and }\int_{D_n} h {\rm d} x=\delta\right\}\quad  \text{for}\  \delta=0, 1.
$$
For any $f_0\in \cN_{1}$, there is an integer $m>0$ large enough such that $\text{supp}(f_0)\subset D_m$.
 Since $\lambda_0(D_m){\leq\lambda_0(D)}<0$, from Section \ref{bd} we see that for any $\beta>0$,
$$
\aligned
\frac{\beta}{\int_{D_m}\big(1-\E_x[\exp(-\beta\tau_m)]\big){\rm d} x}&=
\inf_{f\in\cN_1^{(m)}}
\sup_{g\in \cN_0^{(m)}}
\e_\beta(f-g,f+g)\\
&\leq \sup_{g\in \cN_0^{(m)}}
\e_\beta(f_0-g,f_0+g)\leq \sup_{g\in\cN_{0}}
\e_\beta(f_0-g,f_0+g).
\endaligned
$$
Combining this with the fact $\tau_m\leq \tau_D$, we get
\be\label{leq_unb}
\frac{\beta}{\int_D \big(1-\E_x[\exp(-\beta\tau_D)]\big){\rm d} x}\leq \inf_{f\in\cN_{1}}
\sup_{g\in\cN_{0}}
\e_\beta(f-g,f+g).
\de
Let $u_{n,\beta},\ \widetilde{u}_{n,\beta}(n\geq 1)$ be the unique solutions of equations \eqref{hteq2} on $D_n$.
Define
$$
w_{n,\beta}=u_{n,\beta}\left(\int_{D_n} u_{n,\beta}{\rm d} x\right)^{-1},\quad \widetilde w_{n,\beta}=\widetilde{u}_{n,\beta}\left(\int_{D_n} \widetilde{u}_{n,\beta}{\rm d} x\right)^{-1}
$$
and
$$
\bar{w}_{n,\beta}=(w_{n,\beta}+\widetilde{w}_{n,\beta})/2,\quad \hat{w}_{n,\beta}=(w_{n,\beta}-\widetilde{w}_{n,\beta})/2.
$$
Since
\be\label{u_tildeu}
\int_{D_n} u_{n,\beta}{\rm d} x=\int_{D_n} u_{n,\beta}(\beta-\widetilde{L})\widetilde{u}_{n,\beta}{\rm d} x=\int_{D_n} (\beta-L)u_{n,\beta}\widetilde{u}_{n,\beta}{\rm d} x=\int_{D_n}\widetilde{u}_{n,\beta}{\rm d} x
\de
by the definitions of $u_{n,\beta}$ and $\widetilde{u}_{n,\beta}$, one has that $\bar{w}_{n,\beta}\in\cN_{1}$ and $\hat{w}_{n,\beta}\in\cN_{0}$.
We will prove that for any $n\geq 1$ and $g\in\cN_{0}$,
\begin{equation}\label{geq_unb1}
\e_\beta(\bar{w}_{n,\beta}-g,\bar{w}_{n,\beta}+g)\leq \frac{\beta}{\int_{D_n}\big(1-\E_x[\exp(-\beta\tau_n)]\big){\rm d} x}.
\end{equation}
Then by taking the supremum on the left-hand side of \eqref{geq_unb1} over $g\in \cN_{0}$,
 and letting $n\rightarrow \infty$, we can obtain
$$
\limsup_{n\rightarrow\infty}\sup_{g\in\cN_{0}}
\e_\beta(\bar{w}_{n,\beta}-g,\bar{w}_{n,\beta}+g)\le
\frac{\beta}{\int_{D}\big(1-\E_x[\exp(-\beta\tau_D)]\big){\rm d} x}.
$$
It implies that
\be\label{geq_unb}
\inf_{f\in\cN_{1}}
\sup_{g\in\cN_{0}}
\e_\beta(f-g,f+g)
\le \frac{\beta}{\int_{D}\big(1-\E_x[\exp(-\beta\tau_D)]\big){\rm d} x}
\de
from $\bar{w}_{n,\beta}\in\cN_{1}$.
Hence we obtain \eqref{vf_htunb} by combining \eqref{leq_unb} and \eqref{geq_unb}.

\medskip
It remains to show that  \eqref{geq_unb1} is in force. Indeed,
for any $g\in\cN_{0}$, let $g_{n,1}=g-\hat{w}_{n,\beta}$. Then $g_{n,1}\in \cN_{0}$ by $\hat{w}_{n,\beta}\in\cN_{0}$.
Arguing similarly as we did in \eqref{geq1},
\be\label{geq-unb1}
\aligned
\e_\beta(\bar{w}_{n,\beta}-g,& \bar{w}_{n,\beta}+g)=\e_\beta(\widetilde{w}_{n,\beta}-g_{n,1},w_{n,\beta}+g_{n,1})\\
&= \e_\beta(\widetilde{w}_{n,\beta},w_{n,\beta})-\e_\beta(g_{n,1},g_{n,1})-\e_\beta(g_{n,1},w_{n,\beta})+\e_\beta(\widetilde{w}_{n,\beta},g_{n,1}).
\endaligned
\de
For the last two terms, since $(\beta-L) w_{n,\beta}=(\beta-\widetilde{L}) \widetilde{w}_{n,\beta}=0$ in $\bar{D}_n^c$ and
$$
(\beta-L)w_{n,\beta}=(\beta-\widetilde{L})\widetilde{w}_{n,\beta}=\left(\int_{D_n} u_{n,\beta}{\rm d} x\right)^{-1}\quad \text{in }D_n
$$
by \eqref{hteq2} and \eqref{u_tildeu}, we get that
$$
\aligned
\e_\beta(\widetilde{w}_{n,\beta},g_{n,1})-\e_\beta(g_{n,1},w_{n,\beta})&=\int_D \widetilde{w}_{n,\beta} (\beta-L)  g_{n,1}{\rm d} x-\int_D g_{n,1} (\beta-L)  w_{n,\beta}{\rm d} x\\
&=\int_D (\beta-\widetilde{L})\widetilde{w}_{n,\beta}g_{n,1}{\rm d} x-\int_D g_{n,1} (\beta-L) w_{n,\beta}{\rm d} x\\
&=\left(\int_{D_n} u_{n,\beta}{\rm d} x\right)^{-1}\left(\int_{D_n} g_{n,1}{\rm d} x- \int_{D_n} g_{n,1}{\rm d} x\right)=0.
\endaligned
$$
Hence, putting this equality with \eqref{geq-unb1} and the fact that $ \e_\beta(g_{n,1},g_{n,1})\geq0$ yields that
$$
\e_\beta(\bar{w}_{n,\beta}-g,\bar{w}_{n,\beta}+g)\leq \e_\beta(\widetilde{w}_{n,\beta},w_{n,\beta}),
$$
which implies \eqref{geq_unb1} by the definitions of $w_{n,\beta}$ and $\widetilde{w}_{n,\beta}$.\qed

\bigskip

\noindent{\bf Proof of Theorem \ref{expm1} for arbitrary domains}.
For any domain $D\subset \R^d$, let $\{D_n\}_{n=1}^\infty$ be defined as in the proof of Theorem \ref{vf_ht_unb}, and let  $\tau_n:=\tau_{D_n}$  be the exit time from $D_n$.
Suppose first that $\beta<-\lambda_0(D)$.
Thanks to $\lambda_0(D_n)\leq\lambda_0(D)<0$, we have that $\beta<-\lambda_0(D_n)$ for any $n\geq1$.
So applying The proof of Theorem \ref{expm1} for bounded domain in Section \ref{bd} to $D_n$ and using the fact $\tau_n\leq \tau_D$ for $n\geq 1$, we have that
$$
\frac{\beta}{\int_D\big(\E_x[\exp(\beta\tau_D)]-1\big) {\rm d} x}\leq \frac{\beta}{\int_{D_n}\big(\E_x[\exp(\beta\tau_n)]-1\big){\rm d} x}
= \inf_{f \in\cN_{1}^{(n)}}\e_{-\beta}(f,f),
$$
where
$\cN_1^{(n)}:=\big\{h\in C^2(\bar{D}_n): \int_{D_n} h {\rm d} x=1,\ h=0\ \text{on}\ \partial D_n\big\}.$ It implies that
\be\label{expm_leq}
\frac{\beta}{\int_D\big(\E_x[\exp(\beta\tau_D)]-1\big){\rm d} x}\leq \inf_{f \in\cN_1}\e_{-\beta}(f,f).
\de

{In addition},  let $v_{n,\beta}=(\E_x[\exp(\beta\tau_n)], x\in\R^d)$ be the unique solutions of equations \eqref{epn_poi} corresponding to $D_n$. Then $\bar{v}_{n,\beta}:=v_{n,\beta}(\int_{D_n}v_{n,\beta}{\rm d} x)^{-1}\in \cN^{(n)}_{1}\subset {\cN_{1}}$. Moreover,
$$
\e_{-\beta}(\bar{v}_{n,\beta},\bar{v}_{n,\beta})=\frac{\beta}{\int_D\big(\E_x[\exp(\beta\tau_n)]-1\big) {\rm d} x}.
$$
Letting $n\rightarrow \infty$ in the above equality, by $\tau_n\rightarrow\tau_D$ as $n\rightarrow\infty$ we obtain
\be\label{expm_geq}
\lim_{n\rightarrow\infty}\e_{-\beta}(\bar{v}_{n,\beta},\bar{v}_{n,\beta})=\frac{\beta}{\int_D\big(\E_x[\exp(\beta\tau_D)]-1\big) {\rm d} x}.
\de
Hence, we obtain the desired result for $\beta<-\lambda_0(D)$ from \eqref{expm_leq} and \eqref{expm_geq}.

\medskip

Next, we turn to the case $\beta\geq-\lambda_0(D)$. We also have \eqref{beta>} from \cite[Lemma 1.1]{Fr73}.
By the definition of $\{D_n\}_{n=1}^{\infty}$, we can see that $\lambda_0(D_n)$ increases to  $\lambda_0(D)$ as $n\rightarrow\infty$. Therefore there exists $m>0$ such that $\beta>-\lambda_0(D_m)$.
Let $h$ be the solution to
$$
\begin{cases}
	\big(-\lambda_0(D_m)+L\big)u(x)=0,\ &\text{in }D_m;\\
	u=0,\ &\text{on }\partial D_m
\end{cases}
	$$
satisfying $\int_{D_m}h{\rm d} x=\int_D h{\rm d} x=1$.
since $D_m$ is bounded, the existence of $h$ is clear as the proof of Theorem \ref{expm1} for bounded domain in Section \ref{bd}. Thus we have $h\in{\cN_1}$ and
\begin{align*}
\e_{-\beta}(h, h)
= \big(-\beta-\lambda_0(D_m)\big)\int_{D} h^2{\rm d} x\leq0=\frac{\beta}{\int_D\big(\E_x[\exp(\beta\tau_D)]-1\big){\rm d} x}
\end{align*}
by \eqref{beta>},   which completes the proof.\qed

\section{Ergodic diffusions}\label{erg}
In this section, we consider the ergodic diffusions starting from its stationary distribution.
Let $a(x)=(a_{ij}(x))_{1\leq i,j\leq d},x\in\R^d$ be matrices such that $a_{ij}\in C^2(\R^d),\ 1\leq i,j\leq d$. Note that we do not require the symmetry of the matrix $a(x)$ {since we can cover any matrix $a=a(x)$ considering $\bar a=(a+a^T)/2$.}
We suppose that $a$ satisfies the following {\bf Assumption B}:
\begin{itemize}

\item[(B1)] There are constants $0<\Lambda_1<\Lambda_2$ such that
$$
\Lambda_1||v||^2\leq v\cdot a(x)v\leq\Lambda_2||v||^2,\quad \text{for all }x,v\in\R^d.
$$

\item[(B2)] There is a constant $C>0$ such that
$$
\sum_{i,j=1}^d a_{ij}(x)^2\leq C\qquad \text{for all }x\in \R^d.
$$
\end{itemize}
Let $V\in C^3(\R^d)$ satisfying that
\be\label{cond.3}
\int_{\R^d}\exp(-V(x)){\rm d} x<\infty\qquad\text{and }\qquad
\lim_{n\rightarrow\infty}\inf_{z\notin B(n)}V(z)=\infty,
\de
where $B(n)$ is a ball with radius $n$ centered at the origin.
Using $a$ and $V$, we define a differential operator
\be\label{gnrtn}
(\cL f)(x)=e^{V(x)}\nabla\cdot\big(e^{-V(x)}(a\nabla f)(x)\big), \qquad  f\in C^2_0(\R^d).
\de
We also assume that
there exist constants $r,c>0$ such that
\be\label{cond.4}
(\cL V)(x)\leq  -c\quad \text{for all $x$ with }||x||\geq r.
\de
By \cite[Theorem 1.13.1]{Pi95}, the regularities of $a$ and $V$ 
 imply that there exist a process $Y$ corresponding to $\cL$.
For
\begin{align*}
Z:=\int_{\R^d}\exp(-V(x)){\rm d} x<\infty,
\end{align*}
set the probability measure $\pi({\rm d} x)=\exp(-V(x)){\rm d} x/Z$ on $\R^d$.
Then by \cite[Theorem 6.1.3]{Pi95} (see also \cite[Section 2]{LMS19}), {\bf Assumption B},  \eqref{cond.3} and \eqref{cond.4} yield that process $Y$ is positive recurrent with stationary distribution $\pi$.
Denote by $\E_\pi$ the corresponding expectation starting from $\pi$.
For any $\beta\geq0$, define
$$
\e_{\pi,\beta}(f,g)=\int_{\R^d} f(x)\big((\beta-\cL) g\big)(x)\pi({\rm d} x),\quad f,g\in C^2_0(\R^d).
$$
Throughout this section, for fixed domain $D\subset \R^d$ we denote by $\tau_D$ and $\lambda_0(D)$ the exit time and the principal eigenvalue for $D$ of process $Y$ respectively, when no confusion is possible.
Denote
$$
\mathcal{N}_{\pi,\delta}=\left\{h\in C^2_0(\bar{D}): \pi(h)=\delta\right\},
\quad \delta=0, 1.
$$

Arguing similarly as we did in the proof of Theorem \ref{vf_ht_unb}, we obtain the following variational formulas for the exit time of process $Y$ starting from $\pi$.


\begin{prop}\label{P:erg1}
Assume that $\cL$ is the operator defined in \eqref{gnrtn} with {\bf Assumption B}, \eqref{cond.3} and \eqref{cond.4}. Let $D\subset\R^d$ be a domain with $\lambda_0(D)<0$.
Then for any $\beta>0$,
$$
\frac{\beta}{1-\E_\pi[\exp(-\beta\tau_D)]}=
		\inf_{f\in\cN_{\pi,1}}
		\sup_{g\in \cN_{\pi,2}}
		\e_{\pi,\beta}(f-g,f+g)
$$
and
$$\frac{1}{\E_\pi[\tau_D]}=\inf_{f\in\cN_{\pi,1}}
\sup_{g\in \cN_{\pi,2}}\mathscr{E}_\pi(f-g,f+g).$$
\end{prop}

In particular, if $ \cL$ is reversible(symmetric) with respect to $\pi$, i.e., $a$ is symmetric, similar to Theorem \ref{vf_ht_unb}, variational formulas in Proposition \ref{P:erg1} are reduced to following simple forms.

\begin{cor}\label{c:erg2}
Assume that $\cL$ is the operator defined in \eqref{gnrtn} with symmetric $a$,
{\bf Assumption B}, \eqref{cond.3} and \eqref{cond.4}. Let $D\subset\R^d$ be a domain with $\lambda_0(D)<0$.
Then for any $\beta>0$,
$$
\frac{\beta}{1-\E_\pi[\exp(-\beta\tau_D)]}=
		\inf_{f\in\cN_{\pi,1}}
		\e_{\pi,\beta}(f,f)\quad
\text{and}\quad \frac{1}{\E_\pi[\tau_D]}=\inf_{f\in\cN_{\pi,1}}
\e_\pi(f,f).
$$
\end{cor}



\medspace

\noindent
{\bf Acknowledgement:}\
L.-J. Huang acknowledges support from National Key R\&D Program of China No. 2023YFA1010400.
K.-Y. Kim acknowledges support from National Science and Technology Council of Taiwan (110-2115-M-005-009-MY3).
Y.-H. Mao acknowledges the support from National Key Research and Development Program of China (2020YFA0712901).


\bibliographystyle{plain}
\bibliography{vfht}

\end{document}